\documentclass[a4paper,10pt]{article}

\usepackage{amsmath}

\usepackage{amsthm}
\usepackage[dvips]{graphicx}

\usepackage{epsfig}

\usepackage{psfrag}

\usepackage{srcltx}

\usepackage{amssymb}
\usepackage{latexsym}

\newtheorem{Theorem}{Theorem}

\newtheorem{Definition}[Theorem]{Definition}
\newtheorem{Proposition}[Theorem]{Proposition}
\newtheorem{Lemma}[Theorem]{Lemma}

\newtheorem{Remark}[Theorem]{Remark}

\begin{document}

\title{Hyperbolic polygons of minimal perimeter with given angles}

\author{Joan Porti\footnote{Partially supported by the 
Spanish Micinn/FEDER through grant MTM2009-07594 and prize ICREA ACADEMIA 2008}} 
\date{\today}

\maketitle

\begin{abstract}
We prove that, among all convex hyperbolic polygons with given angles,
the perimeter is minimized by the unique polygon with an inscribed circle.
The proof relies on work of J.-M.\ Schlenker \cite{Schlenker}.
\end{abstract}

Let $0<\beta_1,\ldots,\beta_n<\pi$ be a finite set of angles, and consider $\mathcal P$ 
the set of all compact convex hyperbolic polygons with given ordered angles $0<\beta_1,\ldots,\beta_n<\pi$.
Along the paper, the edges shall be ordered counter-clockwise.
 Assume that 
$\sum (\pi-\beta_i)> 2\pi$, so that $\mathcal P\neq\emptyset$.
The purpose of this note is to give a proof of the following theorem.

\begin{Theorem}
\label{theorem1}
The unique minimum of the perimeter in $\mathcal P$ is realized by the polygon with an inscribed circle.
\end{Theorem}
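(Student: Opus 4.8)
\medskip

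\noindent\emph{Strategy of the proof.}
By the Gauss--Bonnet formula every polygon in $\mathcal P$ has the same area $\sum_i(\pi-\beta_i)-2\pi>0$, so Theorem~\ref{theorem1} concerns convex hyperbolic polygons of fixed angles \emph{and} fixed area. I would view $\mathcal P$ as a connected smooth manifold of dimension $n-3$: once the angles are prescribed, a polygon is determined up to isometry by its ordered edge lengths $(\ell_1,\dots,\ell_n)$, which are constrained only by the three scalar equations expressing that the holonomy of the developing map around the boundary is trivial. The plan is to show that the perimeter $P=\sum_i\ell_i$ is strictly convex on $\mathcal P$, with its unique critical point at the (unique) tangential polygon; the convexity is where \cite{Schlenker} enters.

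\medskip

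\noindent\emph{Critical points and the tangential polygon.}
I would compute $dP$ in the hyperboloid model $\mathbf H^2\subset\mathbf R^{2,1}$. Under $\mathfrak{so}(2,1)\cong\mathbf R^{2,1}$, the generator of the hyperbolic translation along the geodesic $L_i$ carrying $e_i$ corresponds to the outward spacelike unit normal $u_i\in\mathbf R^{2,1}$ of $L_i$. Differentiating the closure equation, $T\mathcal P=\{(\dot\ell_i):\sum_i\dot\ell_i\,u_i=0\}$, so a polygon is critical for $P|_{\mathcal P}$ exactly when $(1,\dots,1)\perp T\mathcal P$, i.e.\ when there is $\Lambda\in\mathbf R^{2,1}$ with $\langle\Lambda,u_i\rangle$ a common nonzero constant for all $i$. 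Checking the causal type of $\Lambda$ and using convexity forces $\Lambda$ timelike, $\Lambda=\mu\,p$ with $p\in\mathbf H^2$; since $\langle p,u_i\rangle=-\sinh\dist(p,L_i)$ this says $p$ is equidistant from all the $L_i$ and lies on their inner sides, i.e.\ the polygon is tangential, and conversely a tangential polygon is critical. Finally, cutting a tangential polygon into triangles from its incenter shows that its inradius $r$ satisfies
\[
\sum_{i=1}^n\arcsin\!\big(\cos(\beta_i/2)/\cosh r\big)=\pi ,
\]
whose left-hand side equals $\tfrac12\sum_i(\pi-\beta_i)>\pi$ at $r=0$ (this is the hypothesis) and decreases strictly to $0$ as $r\to\infty$; hence there is one admissible $r$, one tangential polygon $P_0\in\mathcal P$, and it is the unique critical point of $P|_{\mathcal P}$.

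\medskip

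\noindent\emph{Convexity and conclusion.}
The central step is that $P$ is strictly convex on $\mathcal P$ (in suitable coordinates in which $\mathcal P$ is a convex domain). This I would deduce from \cite{Schlenker}: the second-order deformation theory of convex polygons in $\mathbf H^2$ developed there should yield that the Hessian of the perimeter along any nontrivial deformation within $\mathcal P$ is positive. Granting it, $P_0$ is the unique critical point of a strictly convex function, and for any other $Q\in\mathcal P$ the restriction of $P$ to a suitable path from $P_0$ to $Q$ is strictly convex with vanishing derivative at the $P_0$-end, so $P(Q)>P(P_0)$; thus $P_0$ is the unique minimum.

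\medskip

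\noindent I expect the main obstacle to be precisely this convexity: converting Schlenker's infinitesimal and second-order rigidity statements for convex hyperbolic polygons into positivity of the constrained Hessian of the perimeter. By comparison, the first-variation computation identifying critical points with tangential polygons, and the existence and uniqueness of $P_0$, are routine hyperbolic trigonometry.
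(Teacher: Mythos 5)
Your first--variation analysis and your uniqueness argument for the tangential polygon are essentially the paper's: the tangent space $\{(\dot\ell_i):\sum\dot\ell_i e_i^*=0\}$, the identification of critical points with polygons whose edge normals $e_i^*$ lie in a common affine plane, the exclusion of the lightlike/spacelike cases by compactness, and the monotone function of the inradius $r$ (your $\sum\arcsin(\cos(\beta_i/2)/\cosh r)=\pi$ is the triangle version of the paper's quadrilateral decomposition). Up to that point you are on track, although the claim that convexity ``forces $\Lambda$ timelike'' does need the argument that edges all tangent to a horocycle or to an equidistant curve cannot close up into a compact polygon.

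The genuine gap is the step you yourself flag: the strict convexity of the perimeter on $\mathcal P$. You do not prove it, and it is not a statement you will find in \cite{Schlenker}, whose results used here are first-order (the manifold structure of $\mathcal P$ and its tangent space), not second-order. Note that the perimeter is \emph{linear} in the coordinates $(\ell_1,\dots,\ell_n)$, so any convexity of its restriction to $\mathcal P$ would have to come entirely from the second fundamental form of the codimension-three submanifold $\mathcal P\subset\mathbf R^n$ paired against $dP$; there is no reason offered why this is positive definite, nor why $\mathcal P$ admits coordinates in which it is a convex domain, nor even that $\mathcal P$ is connected (which your path argument from $P_0$ to $Q$ also requires). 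The paper avoids all of this: it observes that the perimeter is proper and bounded below on $\overline{\mathcal P}$, so a global minimum exists, and then proves a boundary lemma --- at any polygon with collapsed edges there is an explicit deformation into the interior of $\mathcal P$, built from the convex-cone relation $e_i^*=a_ie_n^*+b_ie_{k+1}^*$ with $a_i+b_i>1$, that strictly decreases the perimeter. Hence the minimum is an interior critical point, and since the only critical point is the unique tangential polygon, that polygon is the unique minimum. To repair your proof you should replace the unproven convexity by this compactness-plus-boundary argument (or actually establish the constrained convexity, which is a substantial and independent task).
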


Recall that a circle is \emph{inscribed} in a polygon if it is tangent to all of its edges.

The motivation to consider Theorem~\ref{theorem1} comes from the paper \cite{Porti} where it is proved for polygons with angles $\leq \pi/2$, 
as a consequence of the proof of a regeneration result of hyperbolic cone three-manifolds. 
Having not found it in the literature,   here
we give  an easier proof   using only 
tools from plane hyperbolic geometry,  de Sitter sphere and some of the ideas of \cite{Schlenker}.

\medskip

The spherical and Euclidean analogs of Theorem~\ref{theorem1} are know.
By using the polar, the spherical version is a corollary
of Theorem~\ref{thm:fixedlength}(i) below, 
 due to Steiner \cite{Steiner}. 
The Euclidean one is due to  Lindel\"of \cite{lindelof}  (notice that  the area must be fixed to avoid homotheties):

\begin{Theorem}[\cite{Steiner},\cite{lindelof}]
Among all convex spherical polygons with given angles, and among all convex Euclidean polygons with given angles and fixed area, 
the perimeter is minimized by the polygon with an inscribed circle. 
\end{Theorem}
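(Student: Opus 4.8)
\medskip
\noindent\textbf{Proof proposal.} The plan is to handle the spherical and the Euclidean assertions separately, in each case passing to a dual picture that converts ``prescribed angles'' into ``prescribed edge lengths'' and reduces matters to a classical extremal fact. For the \emph{spherical} part I would take the polar dual: if $P\subset S^2$ is a convex polygon in an open hemisphere, put $P^*=\{x\in S^2:\langle x,y\rangle\le0\text{ for all }y\in P\}$, so that $P^*$ is again a convex spherical polygon, $(P^*)^*=P$, and polarity interchanges edges with vertices, turning each edge length $\ell_i$ of $P$ into the angle $\pi-\ell_i$ of $P^*$ and each angle $\beta_j$ of $P$ into an edge length $\pi-\beta_j$ of $P^*$. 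Thus $P$ has the prescribed angles $\beta_1,\dots,\beta_n$ exactly when $P^*$ has the prescribed edge lengths $\pi-\beta_1,\dots,\pi-\beta_n$; and from $\operatorname{perimeter}(P)=\sum_i\ell_i$ together with spherical Gauss--Bonnet applied to $P^*$ (whose $i$-th angle is $\pi-\ell_i$) one gets
\[
\operatorname{Area}(P^*)=\sum_i(\pi-\ell_i)-(n-2)\pi=2\pi-\operatorname{perimeter}(P).
\]
Hence minimizing the perimeter over spherical polygons with the given angles is the same as maximizing the area over spherical polygons with the given edge lengths, which is precisely Theorem~\ref{thm:fixedlength}(i) of Steiner: the maximum is attained only by the polygon $Q$ inscribed in a circle. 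Since the polar of the spherical disk $D(c,r)$ is $D(-c,\tfrac{\pi}{2}-r)$, ``vertices of $Q$ on a circle'' dualizes to ``edges of $P=Q^*$ tangent to a circle'', so the unique minimizer has an inscribed circle. (Existence and nondegeneracy of the minimizer are inherited from the dual statement; in any case $\operatorname{Area}(P^*)\ge0$ bounds the perimeter by $2\pi$, making the relevant moduli space compact.)

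For the \emph{Euclidean} part I would use mixed areas. Fixing the angles fixes the $n$ outward edge-normals $u_1,\dots,u_n$, and a convex polygon $K$ with these normals is encoded by its support numbers $h_i=h_K(u_i)$, with $h$ ranging over an open convex cone in $\mathbb{R}^n$. The area is a quadratic form in $h$, its partial derivatives are the edge lengths, $\ell_i(K)=\partial\operatorname{Area}(K)/\partial h_i$, and $2\operatorname{Area}(K)=\sum_i h_i\ell_i(K)$; in particular the $\ell_i$ are linear in $h$ with symmetric coefficient matrix. Writing $K_0$ for the polygon with these normals circumscribed about the unit circle (so $h_{K_0}(u_i)\equiv1$), symmetry of the matrix gives the identity
\[
\operatorname{perimeter}(K)=\sum_i\ell_i(K)=\sum_i h_i(K)\,\ell_i(K_0)=2A(K,K_0),
\]
$A(\cdot,\cdot)$ denoting the mixed area. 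Minkowski's inequality $A(K,K_0)^2\ge\operatorname{Area}(K)\operatorname{Area}(K_0)$, with equality iff $K$ and $K_0$ are homothetic, then yields $\operatorname{perimeter}(K)^2\ge4\operatorname{Area}(K_0)\operatorname{Area}(K)$; once the area is prescribed, equality holds only for $K=\lambda K_0$ (up to translation) with $\lambda^2\operatorname{Area}(K_0)$ equal to that area, i.e.\ for the unique polygon with the given angles admitting an inscribed circle. Alternatively one may simply invoke Lindel\"of, as the statement does.

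The main obstacle is that each half rests on a genuine rigidity statement that is not merely formal: Theorem~\ref{thm:fixedlength}(i) in the spherical case, and the equality case of Minkowski's inequality in the Euclidean case; everything else is the duality and support-function bookkeeping above. A self-contained treatment would have to reprove one of these. For the Euclidean part the naive variational route locates the extremal polygon painlessly---a Lagrange multiplier argument gives $\ell_i(K)=\mu\,\ell_i(K_0)$ for all $i$, hence $h-\mu\mathbf 1$ lies in the kernel of the area form, which consists exactly of the translation vectors (and $\mathbf 1$ is not one of these, since the $u_i$ positively span $\mathbb{R}^2$), forcing $K=\lambda K_0$ up to translation---but upgrading this critical point to the unique global minimum still needs a second-variation computation or the global convexity packaged in Brunn--Minkowski. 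I would expect the same dual scheme, with $S^2$ replaced by the de Sitter sphere and Theorem~\ref{thm:fixedlength}(i) by its de Sitter counterpart, to be what drives the proof of the hyperbolic Theorem~\ref{theorem1}.
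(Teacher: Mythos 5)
Your proposal is essentially correct, but note that the paper itself gives no proof of this statement: it is quoted as a classical result, with only the one-line indication that the spherical case follows from Theorem~\ref{thm:fixedlength}(i) by passing to the polar, and with references to Lindel\"of, Alexandrov and Knebelman for the Euclidean case. Your spherical half is precisely the reduction the paper alludes to: polarity exchanges prescribed angles with prescribed edge lengths, Gauss--Bonnet gives $\operatorname{Area}(P^*)=2\pi-\operatorname{perimeter}(P)$, and Steiner's theorem on the dual side does the rest (the only points worth making explicit are that a convex spherical polygon lies in an open hemisphere, so the polar is well defined and involutive, and that the tangency points of the dual circle actually lie on the edges rather than on their prolongations). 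Your Euclidean half --- support numbers, the identity $\operatorname{perimeter}(K)=2A(K,K_0)$ with $K_0$ the circumscribed polygon of inradius $1$, and the equality case of Minkowski's inequality --- is exactly the argument in the cited sources, so it is a faithful reconstruction rather than a new route. Your closing guess about the hyperbolic case is only partly right: the paper does use the de Sitter polar, but it does not dualize to a fixed-edge-length area problem; instead it works directly in the space $\mathcal P$ of polygons with given angles, characterizes critical points of the perimeter via the coplanarity of the $e_i^*$ (Lemmas~\ref{lemma:coplanar} and~\ref{Main Lemma}), and handles the boundary and uniqueness separately, since Gauss--Bonnet no longer converts perimeter into a dual area in the Lorentzian setting.
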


Lindel\"of proves the analog for polyhedra in Euclidean space, with given normal directions for the faces. The proof is also found in 
Alexandrov
 \cite[\S 8.2]{Alexandrov} for polyhedra, and in Knebelman \cite{Knebelman} for polygons.
In the hyperbolic setting, there is not a clear analog for polyhedra. See \cite{Rivin,Schlenker,Thurston} and  references therein for convexity results of hyperbolic polyhedra.

\medskip

The proof of  Theorem~\ref{theorem1} relies on 
 the techniques and results of Schlenker in \cite{Schlenker}, by using the polar   in the de Sitter sphere,
though the presentation here is self-contained.  
In particular it is essentially the de Sitter analog of the following theorem proved in \cite{Schlenker}:

\begin{Theorem}[\cite{Schlenker,Steiner}]
\label{thm:fixedlength} 
 In a plane of constant curvature, among all polygons  with given edge lengths, the area is maximized by:
      \begin{itemize}
                 \item[(i)] The polygon  with a circumscribed circle in the spherical    and Euclidean plane.
		\item[(ii)] The polygon with vertices contained in a curve of constant principal curvature (namely a circle, a horocycle, or 
the equidistant of a geodesic) in the hyperbolic plane. 
        \end{itemize}
 \end{Theorem}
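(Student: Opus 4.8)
The plan is to treat this as a constrained extremum problem and to locate the maximiser by a first--variation (Schläfli-type) argument, uniformly in the three geometries. Write $X$ for a plane of constant curvature $\kappa$ ($X=\mathbb S^2,\mathbb E^2,\mathbb H^2$), fix the edge lengths $\ell_1,\dots,\ell_n$ together with their cyclic order, and let $\mathcal C$ be the space of all closed polygonal cycles in $X$ with these consecutive edge lengths (traversed counter-clockwise), modulo $\operatorname{Isom}(X)$. A point of $\mathcal C$ is encoded by the vector of turning (exterior) angles $\epsilon=(\epsilon_1,\dots,\epsilon_n)$, and closing up amounts to the holonomy $H(\epsilon)\in\operatorname{Isom}(X)$ --- the alternating product of geodesic translations $\tau_{\ell_i}$ and rotations $\rho_{\epsilon_i}$ --- being the identity. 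First I would record three routine facts: $\mathcal C$ is compact (for $\mathbb S^2,\mathbb H^2$ the diameter is at most $\sum\ell_i$; for $\mathbb E^2$ one has already quotiented by isometries), so the enclosed area attains a maximum on $\mathcal C$; the maximiser is convex, since a non-convex polygon can be replaced by one with the same edge lengths and strictly larger area by reflecting an offending boundary arc across the chord through its endpoints; and the maximiser is non-degenerate, so near it $\mathcal C$ is a smooth manifold (the differential of $H$ surjects onto $\mathfrak{isom}(X)$ because the vertices are not collinear).

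The heart of the argument is the first--order condition. When $\kappa\neq0$, Gauss--Bonnet gives $\operatorname{Area}=\pm(\sum_i\epsilon_i-2\pi)/\kappa$, so maximising area is the same as extremising the linear functional $L(\epsilon)=\sum_i\epsilon_i$ on the level set $\{H=\mathrm{id}\}$. Differentiating $H$ in $\epsilon_k$ and using $H=\mathrm{id}$, one finds that $\partial_{\epsilon_k}H$ is, modulo the constraint, the infinitesimal rotation $R_{v_k}$ about the $k$-th vertex $v_k$. Hence $\epsilon$ is critical for $L$ on $\{H=\mathrm{id}\}$ exactly when the form $\dot\epsilon\mapsto\sum_k\dot\epsilon_k$ vanishes on $\ker(\dot\epsilon\mapsto\sum_k\dot\epsilon_k R_{v_k})$, i.e.\ when there is a linear form $\xi$ on $\mathfrak{isom}(X)$ with $\xi(R_{v_k})=1$ for every $k$. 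Identifying $\mathfrak{isom}(\mathbb H^2)\cong\mathbb R^{2,1}$ (respectively $\mathfrak{so}(3)\cong\mathbb R^{3}$) via the Killing form, the generator $R_{v_k}$ corresponds to the vertex $v_k$ itself, seen as a point of the hyperboloid model of $\mathbb H^2$ (respectively of $S^2\subset\mathbb R^3$), and $\xi$ to a fixed vector $w$; the criticality condition becomes $\langle w,v_k\rangle=\mathrm{const}$ for all $k$. But for fixed $w$ the locus $\{v\in X:\langle w,v\rangle=\mathrm{const}\}$ is precisely a curve of constant geodesic curvature: a circle if $w$ is timelike, a horocycle if $w$ is lightlike, an equidistant if $w$ is spacelike (on $S^2$ always a circle). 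Thus a polygon with the prescribed edge lengths is critical for the area iff its vertices all lie on one such curve.

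To pass from ``critical'' to ``the maximiser'' I would invoke uniqueness rather than a Hessian. Among polygons with the given edge lengths in the given cyclic order there is exactly one inscribed in a constant-curvature curve: as the ``radius'' of the curve varies (with the horocycle and the equidistant as the two limiting regimes) the angle subtended by a chord of fixed length is strictly monotone, so the single scalar equation expressing that the inscribed polygon closes up has a unique solution, which also pins down the type of the curve; one checks that polygon is convex and realises the prescribed order. Since a maximiser exists on $\mathcal C$ and must be one of the critical points, and there is only one critical point, that point is the unique maximiser --- giving (ii), and (i) on $\mathbb S^2$. For the Euclidean case, where $L\equiv2\pi$ and the Killing form degenerates, the same scheme applies with $\operatorname{Area}$ itself in place of $L$ (a vertex motion preserving all edge lengths again contributes the rotation generators, now in $\mathfrak{isom}(\mathbb E^2)$), or one simply rescales and lets $\kappa\to0$ in the statement already proved.

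The step I expect to be the main obstacle is the calculation and identification in the middle paragraph: correctly differentiating the holonomy to see that $\partial_{\epsilon_k}H$ is the rotation about $v_k$, and then turning the abstract Lagrange condition (that $(1,\dots,1)$ lies in the image of $(dH)^{\ast}$) into the geometric statement that the vertices are concyclic. This requires a careful dictionary between $\operatorname{Isom}(X)$, its Lie algebra with the Killing form, and the pencil of constant-curvature curves, and careful bookkeeping of signs so that one is genuinely selecting the maximum and the correct (closed) curve. A secondary technical nuisance is the uniqueness/monotonicity statement for inscribed polygons, which must be handled uniformly across the circle, horocycle and equidistant regimes and must allow the centre of the circle to fall outside the polygon.
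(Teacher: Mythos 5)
The paper offers no proof of Theorem~\ref{thm:fixedlength}: it is imported from Steiner \cite{Steiner} and Schlenker \cite{Schlenker}, so there is no in-paper argument to compare against. What you propose is essentially Schlenker's unified first-variation scheme, and it is also exactly the de Sitter polar of the argument the paper \emph{does} give for Theorem~\ref{theorem1}: your holonomy $H$ and the identity $\partial_{\epsilon_k}H=R_{v_k}$ play the role of the map $F$ and Lemma~\ref{lemma:dF} (the infinitesimal rotation about $v_k$, i.e.\ $v_k\in\mathbf R^2_1$ under Remark~\ref{remark:identify}, replacing the infinitesimal translation along $e_i$, i.e.\ $e_i^*$); your Lagrange condition that $(1,\dots,1)$ lie in the image of $(dH)^*$ is the rank condition on the matrix $A'$ in the proof of Lemma~\ref{Main Lemma}; and the identification of the critical locus with $\{v:\langle w,v\rangle=\mathrm{const}\}$ is the analogue of Lemma~\ref{lemma:coplanar}, now with all three causal types of $w$ admissible --- which is precisely why horocycles and equidistants appear in the hyperbolic answer. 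So the architecture is correct and does yield the theorem.

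Two steps carry more weight than your sketch concedes. First, the convexity of the maximiser: reflecting a boundary arc across its chord preserves the edge lengths but can destroy embeddedness, so you must either run the whole argument with the algebraic area on all closed cycles (and then choose the reflected arc so the area genuinely increases), or show separately that the maximiser is embedded; this is the classical weak point of Steiner-type arguments and cannot simply be asserted. Relatedly, ``there is only one critical point'' is false on all of $\mathcal C$ (crossed cyclic polygons are also critical); what you need is uniqueness among \emph{convex} inscribed polygons. Second, that uniqueness: monotonicity of the subtended angle in the radius settles it only when the centre lies inside the polygon; when one edge is long the closing condition becomes $\alpha_1=\sum_{i\geq 2}\alpha_i$ and the naive sum is not monotone, and in $\mathbf H^2$ the circle, horocycle and equidistant regimes must be glued into one connected monotone family --- contrast the function $\sum\theta_i(r)$ in the proof of Lemma~\ref{lemma:uniquenes}, which works painlessly only because there the relevant curve is always a circle. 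You flag both points, rightly: they are where the actual work of \cite{Schlenker} lies.
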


The spherical version is due to Steiner \cite{Steiner}, but the  Euclidean one was known before. The hyperbolic one is due to Schlenker \cite{Schlenker}, who reproves all the cases
with a unified approach.

\medskip

In Section~\ref{section:Lorentz} some material about Lorentz space and de Sitter sphere is recalled. Section~\ref{section:spaceofpolygons}
is devoted to the proof of
Theorem~\ref{theorem1}.

\section{Lorentz  space and de Sitter sphere.}
\label{section:Lorentz}
We shall work in Lorentz  space $\mathbf R^2_1$, namely $\mathbf R^3$ with the bilinear product
$$
(x^0,x^1,x^2)\cdot (y^0,y^1,y^2)= -x^0y^0+x^1y^1+x^2y^2.
$$
A nonzero vector $x\in\mathbf R^2_1$ is called \emph{space-like} if $x\cdot x>0$, \emph{time-like}
if $x\cdot x<0$ and \emph{light-like} if $x\cdot x=0$.

In this model the hyperbolic plane $\mathbf H^2$ and the de Sitter sphere $\mathbf S^2_1$ are
$$
\begin{array}{rcl}
 \mathbf H^2 & = & \{ x\in \mathbf R^2_1\mid x\cdot x= -1, \, x^0>0\}, \\
 \mathbf S^2_ 1 & = & \{ x\in \mathbf R^2_1\mid x\cdot x= 1 \}.
\end{array}
$$
The Riemannian product of $\mathbf H ^2$ and the Lorentz product of $\mathbf S^2_1$ are the restriction to the tangent space of
the bilinear product of  $\mathbf R^2_1$.

 Points in de Sitter sphere  $\mathbf S^2_ 1 $ are viewed as \emph{oriented lines} in hyperbolic planes, cf.\ \cite{Coxeter,Ratcliffe}.
To orient a line is equivalent to chose a unitary normal vector field.
The correspondence between oriented lines and points in $\mathbf S^2_ 1$ is as follows: 
 if $l\subset \mathbf H^2$ is an oriented line, then there is a unique $l^*\in\mathbf S^2_ 1 $ so that 
$$
l= \{ x\in\mathbf H^2\subset \mathbf R^2_1 \mid x\cdot l^*=0\}
$$ 
and the normal vector points to the half-space $\{ x\in\mathbf H^2\subset \mathbf R^2_1 \mid x\cdot l^*>0\}$.

For a point $v\in\mathbf H^2$, the set of oriented lines through $v$ is a geodesic in $\mathbf S^2_1$:
$$
v'=\{l^*\in \mathbf S^2_1\mid l^*\cdot v=0\}.
$$

For a convex hyperbolic polygon we follow the convention that its edges are oriented outwards. In particular if $e_1,\ldots,e_n$ are the edges of a polygon, then its interior is the set
$\{x\in\mathbf H^2\mid x\cdot e_i^*<0, i=1,\ldots,n\}
$.

\begin{Definition}
 Given a convex polygon $p\subset \mathbf H^2$ its \emph{polar} $p^*\subset \mathbf{S}^2_1$  is the set of oriented lines that meet $p$ at precisely one point and the normal vector points outwards.
\end{Definition}

Let  $v_1,\ldots v_n\in\mathbf H^2 $ and $e_1,\ldots,e_n\subset \mathbf H^2$ denote the respective vertices and edges of $p$, 
so that $e_i$ joins $v_{i-1}$ to $v_{i}$ (with $v_0=v_n$).
Then $e_1^*,\ldots,e_n^*$
are the vertices of $p^*$. Let also
 $v_1^*,\ldots,v_n^*$ denote the edges of $p^*$
(each $v_i^*$ is a segment of the geodesic $v_i'\subset \mathbf S^2_1$ dual to $v_i$, defined above). 
If the angle of $p $ at $v_i$ is $\beta _i$, then the length of $v_i^*$ is $\pi-\beta_i$, and
the length of $e_i$ equals the (time-like) angle of $p^*$ at  $e_i^*$.

The following lemma and remarks will be useful in the proof of Theorem~\ref{theorem1}.

\begin{Lemma}
\label{lemma:coplanar}
A \emph{compact} polygon in $\mathbf H^2$ with edges $e_1,\ldots,e_n$ has an inscribed circle if and only if 
$e_1^*,\ldots,e_n^*$ are coplanar in Lorentz  space $ \mathbf R^2_ 1$.
\end{Lemma}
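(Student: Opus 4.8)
\medskip

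The plan is to reduce the statement to a single question about the Lorentz type of an affine plane, and then settle that question using compactness.

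First I would record the elementary distance formula: for $c\in\mathbf H^2$ and an oriented line $\ell\subset\mathbf H^2$ with dual $\ell^*\in\mathbf S^2_1$, one has $c\cdot\ell^*=-\sinh\operatorname{dist}(c,\ell)$ when $c$ lies on the interior side $\{x\cdot\ell^*<0\}$ of $\ell$, the opposite sign on the other side, and $c\cdot\ell^*=0$ exactly on $\ell$. Consequently a circle of radius $r>0$ centered at $c\in\mathbf H^2$ is inscribed in $p$ if and only if $c$ lies in the interior of $p$ and $\operatorname{dist}(c,e_i)=r$ for every $i$, i.e.\ $c\cdot e_1^*=\dots=c\cdot e_n^*=-\sinh r$; conversely, if $c\cdot e_i^*=-\sinh r$ for all $i$ then the closed disc $\overline{B(c,r)}$ lies on the interior side of every edge-line and hence inside $p$, so $\partial B(c,r)$ touches each edge in its relative interior. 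This gives the reformulation
\[
p\text{ has an inscribed circle}\quad\Longleftrightarrow\quad\exists\,c\in\mathbf H^2\ \text{with}\ c\cdot e_1^*=\dots=c\cdot e_n^*,
\]
where the common value is automatically negative: it cannot be $0$, since the $n\ge 3$ edge-lines of a polygon are not concurrent, and it cannot be positive, since no point lies beyond all edges of a compact polygon. Call this equivalence $(\star)$.

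One direction of the lemma now follows at once: if $c$ is an incentre of $p$, with common value $\mu=c\cdot e_i^*$, then $e_1^*,\dots,e_n^*$ all lie on the affine plane $\{y\in\mathbf R^2_1:y\cdot c=\mu\}$. For the converse, assume the $e_i^*$ coplanar. They are distinct and do not lie on a line, for a line meets $\mathbf S^2_1$ in at most two points unless it is a ruling, and on a ruling consecutive vertices would satisfy $e_i^*\cdot e_{i+1}^*=1$, i.e.\ $\beta_i=\pi$, which is excluded; so they span a genuine affine plane $\Pi=\{y:y\cdot w=\mu_0\}$, with $(w,\mu_0)$ determined up to a common scalar. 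The whole matter reduces to showing that $w$ is \emph{time-like}: granting this, rescale so that $w\cdot w=-1$ and $w^0>0$, i.e.\ $w\in\mathbf H^2$; then $c:=w$ satisfies $c\cdot e_1^*=\dots=c\cdot e_n^*$, and by $(\star)$ the polygon $p$ has an inscribed circle, which is moreover centered at $w$.

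The hard step — and the one obstacle worth worrying about — is that $w$ can be neither space-like nor light-like, and this is where compactness of $p$ is essential. Arguing by contradiction, write $e_i^*=\mu_0w+f_i$ with $f_i\in w^{\perp}$, so that $f_i\cdot f_i=1-\mu_0^2\,(w\cdot w)$, and let $x$ run over the level curves $\{x\in\mathbf H^2:x\cdot w=\mathrm{const}\}$: these are the curves equidistant from the geodesic $\mathbf H^2\cap w^{\perp}$ if $w$ is space-like, and horocycles centered at the ideal point determined by $w$ if $w$ is light-like, and in either case they are non-compact. Expanding $x\cdot e_i^*=\mu_0(x\cdot w)+x\cdot f_i$ and letting $x$ escape along such a curve, one checks — using where needed the cyclic identities $e_i^*\cdot e_{i+1}^*=\cos(\pi-\beta_i)\in(-1,1)$, which force all the $f_i$ into a single time-cone of $w^{\perp}$ in the relevant subcase and thereby fix the signs of the terms $x\cdot f_i$ — that all the numbers $x\cdot e_i^*$ tend simultaneously to $-\infty$ along one end of the curve (hence to $+\infty$ along the other). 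In the first case a non-compact arc of the curve lies in the interior of $p$; in the second, in the common exterior of all edge-lines; either is impossible for a compact polygon. Hence $w$ is time-like, and the proof is complete. The bookkeeping of signs and time-cones in this last step is the only delicate point; everything before it is formal once the distance formula and the polar correspondence are in hand.
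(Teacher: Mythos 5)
Your reduction to the statement ``the normal $w$ of the plane spanned by the $e_i^*$ must be time-like'' is the same as the paper's, and the easy direction is fine. For the hard direction you take a genuinely different route: the paper shows that the tangency points of the edges with the putative horocycle or equidistant curve would move monotonically along that non-compact curve, so the polygon could not close up, whereas you try to exhibit a non-compact set on which all the functions $x\mapsto x\cdot e_i^*$ acquire a common sign. That strategy can be made to work, but as written it has a genuine gap, precisely at the step you flag as ``delicate.''

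Normalize $w\cdot w=1$ in the space-like case. Your decomposition $e_i^*=\mu_0 w+f_i$ gives $f_i\cdot f_i=1-\mu_0^2$, so the $f_i$ are time-like in the Lorentzian plane $w^\perp$ --- and can then be forced into a single time-cone by $f_i\cdot f_{i+1}=\cos(\pi-\beta_i)-\mu_0^2<1-\mu_0^2<0$ --- \emph{only when} $|\mu_0|>1$, i.e.\ only when the edge-lines really are tangent to a common equidistant curve. When $|\mu_0|<1$ (geometrically: every edge-line \emph{crosses} the geodesic $\gamma_0=w^\perp\cap\mathbf H^2$ at the same angle), the $f_i$ are space-like, the same inequalities force \emph{consecutive} $f_i$ onto \emph{opposite} branches of the hyperbola $\{f\cdot f=1-\mu_0^2\}\subset w^\perp$, and along your level curves the terms $x\cdot f_i$ then blow up with alternating signs: the claim that all $x\cdot e_i^*$ tend simultaneously to $-\infty$ is false there, and no contradiction is reached. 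The borderline case $|\mu_0|=1$ ($f_i$ light-like) and the light-like case $w\cdot w=0$ (where $\mathbf R w+w^{\perp}\neq\mathbf R^2_1$, so the decomposition itself is unavailable) also escape your bookkeeping. These sub-cases are not vacuous a priori and need a separate argument; for instance, for $|\mu_0|=\cos\alpha<1$ one can write $x=(\cosh r\cosh u,\cosh r\sinh u,\sinh r)$ and $x\cdot e_i^*=\pm\sin\alpha\cosh r\,\sinh(u-s_i)+\cos\alpha\sinh r$, and check that $\{x: x\cdot e_i^*<0\ \forall i\}$ is unbounded in the direction $r\to-\infty$ whenever it is nonempty --- an escape \emph{transverse} to your level curves, not along them. (In fairness, the paper's one-line dichotomy ``light-like $=$ horocycle, space-like $=$ equidistant'' quietly elides the same sub-case.) A second, smaller gap: you twice invoke the assertion that no point of $\mathbf H^2$ lies strictly on the outer side of every edge-line of a compact convex polygon (to fix the sign in $(\star)$, and to conclude when all $x\cdot e_i^*\to+\infty$). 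This is true but not proved; one argument: for $x\notin p$ and an interior point $c$, the geodesic through $x$ and $c$ meets each edge-line exactly once, so $x$ lies on the same (inner) side as $c$ of the line of the edge through which that geodesic exits $p$ on the far side of $c$. With those two points repaired your proof would be complete.
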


\begin{proof}
The proof requires the following formula, that can be easily proved (see for instance \cite[3.2.8]{Ratcliffe}):
If $p\in\mathbf H^2$ is at oriented distance $d\in\mathbf R$ from the geodesic $l\subset\mathbf H^2$, then
$$
p\cdot l^*=\sinh(d).
$$
 From this formula, one of the implications is easy. For the other implication, assume that $e_1^*,\ldots,e_n^*$ are coplanar, then one needs to check that the Lorentz  
normal to the plane is time-like (i.e.\ is a point in $\mathbf H^2$, and therefore the center of the circle). If it was not time-like, then 
the Lorentz  normal would be either light-like (and the edges would be tangent to a horocycle) or  space-like (and the edges would be equidistant to a
geodesic), but this would contradict the compactness of the polygon. More precisely, given a point  $x\in \mathbf H^2$ and a complete  curve  $\Lambda\subset\mathbf H^2$
with constant principal curvature,
there are at most two geodesics through $x$ and tangent to $\Lambda$. This means that if all the edges of a polygon are tangent to $\Lambda$, then when we follow the ordered edges of the 
polygon, the tangency points are monotonic in $\Lambda$. Hence if $\Lambda$ is not a circle, then the polygon is not closed.
\end{proof}

The group of linear isometries of $\mathbf R^2_1$  is denoted by  $SO(2,1)$. 
Restricting the elements of the identity component of $SO_0(2,1)$ to either
$\mathbf H^2$ or $\mathbf S^2_1$,  $SO_0(2,1)$ is the identity component
of the isometry group for $\mathbf H^2$ and also for 
  $\mathbf S^2_1$. Thus:

\begin{Remark}
\label{remark:Lie}
The Lie algebra
of infinitesimal isometries for either space, $\mathbf R^2_1$,  $\mathbf H^2$ and $\mathbf S^2_1$ is 
$\mathfrak{so}(2,1)$.
\end{Remark}

In $\mathbf R^2_1$ there is a Lorentzian cross product $\boxtimes$, defined by the rule 
$$
(u\boxtimes v)\cdot w=\det (u,v,w),\qquad \forall u,v,w\in\mathbf R^2_1,
$$
where $\det (u,v,w)$ denotes the determinant of the matrix with entries the components of $u, v,w$. In particular 
$(\mathbf R^2_1,\boxtimes)$ is a Lie algebra and we have:

\begin{Remark}
\label{remark:identify} 
The space $\mathbf R^2_1$ equipped with the Lorentz product $\boxtimes $ is a Lie algebra isomorphic to 
$\mathfrak{so}(2,1)$.
\end{Remark}

This is the Lorentzian version of the isomorphism between $\mathfrak{so}(3)$ and $\mathbf  R^3$ equipped with the standard cross product.

\section{The space of polygons with given angles} 
\label{section:spaceofpolygons}

Let $\mathcal P$ denote the space of convex hyperbolic polygons with fixed ordered angles
 $0<\beta_1,\ldots,\beta_n<\pi$. 
We embed it in  $\mathbf R^n$, with coordinates the length of the edges $l_1,\ldots,l_n >0$.

By convexity, the closure $\overline{\mathcal P}$  is obtained by considering additional  polygons with edges of length zero.

To analyze the embedding $\mathcal P\subset\mathbf R ^n$, we make the following construction. Fix a point $v_0\in\mathbf H^2$ and
a  unitary tangent vector $u_0\in T_{v_0}^1\mathbf H^2$.
Consider the polygonal path starting at $v_0$ with direction $u_0$ that consists of $n$ (ordered) segments of lengths
$l_1,l_2,\ldots,l_n$ with (ordered) angles $\beta_1,\beta_2,\ldots,\beta_{n-1}$. 
Namely, at the end of the $i$-th edge, turn left the tangent vector by an angle $\pi-\beta_i$ and continue along the geodesic of this vector
up to length $l_{i+1}$.
 At the end of this path,
consider the tangent unitary vector forming an angle $\beta_n$ (turn left the tangent vector an angle $\pi-\beta_n$), and call 
this vector $F(l_1,l_2,\ldots,l_n)\in T^1\mathbf H^2$.
 This construction defines a map from $\mathbf R^n$ to the unit tangent bundle: 
$$
F:\mathbf R^n\to T^1\mathbf H^2.
$$
With this definition
$$
\overline{\mathcal P} \subseteq   F^{-1}(u_0)\cap \{l_1,\cdots,l_n\geq 0\}.
$$ 
Maybe there  is no equality, because there may exist polygonal paths in $F^{-1}(u_0)$ with self-intersection, but by convexity $\mathcal P$ is \emph{open} in  $F^{-1}(u_0)$.
The map $F$ is analytic and we want to show that it is a submersion. For this we identify $T^1\mathbf H^2$ with the group of orientation-preserving 
hyperbolic isometries, 
by mapping any orientation-preserving isometry $\gamma$ to $\gamma(u_0)\in  T^1\mathbf H^2$. Hence, using also Remark~\ref{remark:identify} we have natural isomorphisms
$$
T_{u_0}(T^1\mathbf H^2)\cong \mathfrak{so}(2,1)\cong \mathbf R^2_1.
$$

\begin{Lemma}
\label{lemma:dF}
 The components of the tangent map $F_*: \mathbf R^n\to\mathbf R^2_1$ at a point of $F^{-1}(u_0)$ are 
$
(e_1^*,\ldots, e_n^*)
$. Namely:
$$
F_*\left(\frac{\partial\phantom{l_i}}{\partial l_i}\right)= e_i^*.
$$
\end{Lemma}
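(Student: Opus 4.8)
The plan is to compute the derivative of $F$ one edge-length at a time, by viewing the polygonal path as a composition of hyperbolic isometries and differentiating this composition with respect to $l_i$. Fix a point of $F^{-1}(u_0)$, i.e.\ a closed polygon $p$ with vertices $v_0,\ldots,v_n=v_0$ and edges $e_1,\ldots,e_n$. Along the construction of $F$, write $F(l_1,\ldots,l_n)=\rho_n\circ T_{l_n}\circ\rho_{n-1}\circ T_{l_{n-1}}\circ\cdots\circ\rho_1\circ T_{l_1}(u_0)$, where $T_l$ denotes ``translate a length $l$ along the current geodesic'' and $\rho_j$ denotes ``rotate left by angle $\pi-\beta_j$'' (all expressed as elements of $SO_0(2,1)$ acting on $T^1\mathbf H^2$ via the identification $T^1\mathbf H^2\cong SO_0(2,1)$). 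Using the right-invariant trivialisation of $T(T^1\mathbf H^2)$ coming from this group identification, together with Remark~\ref{remark:identify} identifying $\mathfrak{so}(2,1)$ with $(\mathbf R^2_1,\boxtimes)$, the derivative $\partial F/\partial l_i$ at the basepoint equals the adjoint action of the partial product $\rho_n T_{l_n}\cdots\rho_{i+1}T_{l_{i+1}}$ applied to the infinitesimal generator of the one-parameter subgroup $\{T_l\}$.

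First I would record the two ingredients. (1) The infinitesimal generator of translation along a fixed oriented geodesic $\ell\subset\mathbf H^2$ is, under $\mathfrak{so}(2,1)\cong\mathbf R^2_1$, the space-like unit vector $\ell^*\in\mathbf S^2_1$ dual to $\ell$: this is exactly the standard dictionary (a hyperbolic translation fixes the two endpoints of $\ell$ on the boundary and its Killing field vanishes on $\ell$, which pins down the generator up to scale, and the unit-speed normalisation gives precisely $\ell^*$). (2) Conjugating a one-parameter subgroup by an isometry $\gamma$ conjugates its generator by $\mathrm{Ad}(\gamma)$, and under the identification $\mathbf R^2_1\cong\mathfrak{so}(2,1)$ the adjoint action of $\gamma\in SO_0(2,1)$ is just the linear action of $\gamma$ on $\mathbf R^2_1$ (this is the content of Remark~\ref{remark:Lie} and the fact that $SO_0(2,1)$ acts on its Lie algebra by the restriction of its linear action on $\mathbf R^2_1$).

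Combining these: the generator of the $i$-th translation subgroup, read at the basepoint, is $(\rho_n T_{l_n}\cdots\rho_{i+1}T_{l_{i+1}})\cdot (\text{generator of }T\text{ along the }i\text{-th edge as based at }v_{i-1})$. But the $i$-th edge of the actual closed polygon $p$ \emph{is} the image of the initial segment under $\rho_{i-1}T_{l_{i-1}}\cdots\rho_1 T_{l_1}$, and since the whole product $\rho_n T_{l_n}\cdots\rho_1 T_{l_1}$ fixes $u_0$ (we are at a point of $F^{-1}(u_0)$), the partial product $\rho_n T_{l_n}\cdots\rho_{i+1}T_{l_{i+1}}$ is the inverse of $\rho_i T_{l_i}\cdots\rho_1 T_{l_1}$; hence it carries the $i$-th edge of $p$ back to the reference geodesic, so it carries the dual point $e_i^*\in\mathbf S^2_1$ to the generator we started with. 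Reversing this, $F_*(\partial/\partial l_i)=e_i^*$, with the orientation of $e_i$ (outward normal) matching the convention so that no sign is lost.

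The main obstacle is purely bookkeeping: getting the trivialisation conventions consistent so that the adjoint-conjugation really produces $e_i^*$ with the \emph{outward} orientation rather than $-e_i^*$, and making sure the ``turn left by $\pi-\beta_i$'' rotations drop out cleanly (they do, because a rotation about $v_i$ fixes $v_i$ and its dual geodesic $v_i'$, and it maps the oriented line $e_i$ to the oriented line $e_{i+1}$ correctly, so it is exactly the factor needed to pass from one edge's dual point to the next). I would present the argument in the right-invariant framing, state (1) and (2) as the two facts used, and then the identity $F_*(\partial/\partial l_i)=e_i^*$ falls out in one line from the telescoping of the partial products at a point of $F^{-1}(u_0)$.
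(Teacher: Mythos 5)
Your overall strategy is the paper's: view $F$ as a composition of isometries, identify $T^1\mathbf H^2$ with $SO_0(2,1)$, and read off $\partial F/\partial l_i$ as the infinitesimal generator of a translation, using the dictionary ``generator of translation along an oriented line $\ell$'' $=$ ``$\ell^*$'' together with $\mathrm{Ad}(\gamma)=\gamma$ under $\mathfrak{so}(2,1)\cong\mathbf R^2_1$. Your ingredients (1) and (2) are exactly what is needed. But the bookkeeping in the middle is not merely delicate, it is wrong as stated, and the closing step ``Reversing this'' is not a deduction. Concretely: if your factors $T_{l_j},\rho_j$ are the \emph{actual} isometries of $\mathbf H^2$ (rotations about the actual vertices, translations along the actual edges), then every factor with index $>i$ depends on $l_i$, so you may not differentiate the product factor by factor while holding the other factors fixed. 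If instead they are one-parameter subgroups based at $(v_0,u_0)$, hence mutually independent, the correct (developing-map) product has the first step leftmost with each subsequent step multiplied on the right, and the derivative is $\mathrm{Ad}$ of the partial product of the \emph{earlier} factors applied to the generator of the \emph{reference} translation. Your formula --- $\mathrm{Ad}$ of the \emph{later} partial product applied to the generator along the $i$-th edge --- evaluates, by your own observation that this partial product carries the line of $e_i$ back to the reference line, to the reference dual point: the \emph{same} vector for every $i$. That would make $F_*$ have rank one, so something must be off, and nothing you wrote licenses the reversal that rescues the answer.

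The fix is short and is essentially the paper's one-line proof: at a point of $F^{-1}(u_0)$ the endpoint frame is $u_0$ itself, and increasing $l_i$ by $t$ rigidly moves the whole tail of the path (hence the endpoint frame) by the translation $\sigma_t$ along the geodesic containing $e_i$; thus $F(l+t\delta_i)=\sigma_t\circ F(l)$, and since $F(l)$ is the identity in the group identification, the derivative is the generator of $\sigma_t$, namely $e_i^*$ by your fact (1). Equivalently, in the developing-map product the derivative is $\mathrm{Ad}(g_{i-1})$ applied to the reference generator, where $g_{i-1}$ is the composition of the first $i-1$ steps; since $g_{i-1}$ carries the reference oriented line \emph{to} the oriented line containing $e_i$, this equals $e_i^*$ directly, with no reversal needed.
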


\begin{proof}
Vary $l_i$ while keeping  $l_j$ constant for $j\neq i$. The result on $F$
 is equivalent to composing $F$ 
with  the isometry with axis the geodesic containing $e_i$. By the previous isomorphism of tangent spaces, its derivative
$\frac{\partial F}{\partial l_i}$
 is   $e_i^*$.
\end{proof}

The computation of the tangent space below is Theorem A$_{dS}$ in \cite{Schlenker}.

\begin{Proposition}[\cite{Schlenker}]
 \label{lemma:A}
The subset $ \mathcal P\subset \mathbf R^n$ is a smooth analytic submanifold   of codimension three.
The  tangent subspace is:
$$
T_p\mathcal P=\{ (\dot{l}_1,\ldots,\dot{l}_n)\in \mathbf R^n\mid \sum \dot{l}_i e_i^*=0\}\subset \mathbf R^n,
$$
where $\sum \dot{l}_i e_i^*$ lies in $\mathbf R^2_1$.
\end{Proposition}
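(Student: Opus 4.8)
The plan is to show that the analytic map $F\colon\mathbf R^n\to T^1\mathbf H^2$ is a submersion at every point of $\mathcal P\subseteq F^{-1}(u_0)$. Granting this, the analytic implicit function theorem makes $F^{-1}(u_0)$, near each such point, a smooth analytic submanifold of $\mathbf R^n$ of codimension $\dim T^1\mathbf H^2=3$ whose tangent space is $\ker F_*$; and since $\mathcal P$ is open in $F^{-1}(u_0)$, the same structure is inherited by $\mathcal P$. By Lemma~\ref{lemma:dF}, under the identification $T_{u_0}(T^1\mathbf H^2)\cong\mathbf R^2_1$ of Remark~\ref{remark:identify} we have $F_*(\dot l_1,\ldots,\dot l_n)=\sum_i\dot l_i\,e_i^*$, so that $\ker F_*=\{(\dot l_1,\ldots,\dot l_n)\mid\sum_i\dot l_i e_i^*=0\}$ is exactly the asserted tangent space. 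Thus the whole statement reduces to the claim that, for a compact convex polygon with edges $e_1,\ldots,e_n$ and all edge lengths positive, the vectors $e_1^*,\ldots,e_n^*$ span $\mathbf R^2_1$; equivalently, $F_*$ is onto.

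To prove the claim, note first that $n\ge3$, since $\sum_i(\pi-\beta_i)>2\pi$ while each $\pi-\beta_i<\pi$. Assume, for a contradiction, that the $e_i^*$ span a proper subspace of $\mathbf R^2_1$; then there is a nonzero $\nu\in\mathbf R^2_1$ with $\nu\cdot e_i^*=0$ for all $i$, and we argue according to the causal type of $\nu$. If $\nu$ is time-like, normalize it to a point of $\mathbf H^2$; by the distance formula $x\cdot e_i^*=\sinh\dist(x,e_i)$ used in the proof of Lemma~\ref{lemma:coplanar}, the relation $\nu\cdot e_i^*=0$ says that $\nu$ lies on the geodesic carrying $e_i$, for every $i$. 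If $\nu$ is light-like, it represents an ideal point $\xi\in\partial\mathbf H^2$; the plane $(e_i^*)^\perp$ is Lorentzian and meets $\mathbf H^2$ in the geodesic carrying $e_i$, so its two null directions are the ideal endpoints of that geodesic, and $\nu\cdot e_i^*=0$ forces $\xi$ to be one of them, for every $i$. If $\nu$ is space-like, normalize it to $m^*$ for an oriented geodesic $m$; since $e^*\cdot m^*$ equals, up to sign, the cosine of the intersection angle when two geodesics cross in $\mathbf H^2$ and has absolute value at least $1$ otherwise, the relation $e_i^*\cdot m^*=0$ forces the geodesic carrying $e_i$ to be perpendicular to $m$, for every $i$. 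In each of the three cases, two consecutive edges $e_i,e_{i+1}$ share the vertex $v_i$ and lie on distinct geodesics (because $0<\beta_i<\pi$); yet both of those geodesics pass through $v_i$ and share the additional datum ($\nu$, or $\xi$, or perpendicularity to $m$) that determines a geodesic through $v_i$ uniquely, so they must coincide, a contradiction. Hence $e_1^*,\ldots,e_n^*$ span $\mathbf R^2_1$, and the proposition follows.

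The substance of the proof is this spanning claim: geometrically it says that a convex polygon cannot have all its edge-lines concurrent at an interior point (time-like $\nu$), all asymptotic to a common ideal point (light-like $\nu$), or all perpendicular to a common geodesic (space-like $\nu$), and the comparison of two consecutive vertices above settles all three at once, without invoking the monotonicity argument used in the proof of Lemma~\ref{lemma:coplanar}. Alternatively, one could deduce the claim from Lemma~\ref{lemma:coplanar}: a proper span would make $e_1^*,\ldots,e_n^*$ coplanar, hence the polygon would have an inscribed circle of some radius $r>0$ with center $c$, whence $e_i^*\cdot c=-\sinh r\ne0$ for all $i$; the $e_i^*$ would then lie both in a linear plane through the origin and in an affine plane avoiding it, whose intersection is at most a line --- contradicting that the $n\ge3$ distinct points $e_i^*\in\mathbf S^2_1$ (distinct edges of a convex polygon lying on distinct oriented lines) cannot be collinear.
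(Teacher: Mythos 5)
Your proof follows the same skeleton as the paper's: Lemma~\ref{lemma:dF} identifies $F_*$ with $(\dot l_1,\ldots,\dot l_n)\mapsto\sum\dot l_i e_i^*$, submersivity plus the implicit function theorem give the codimension-three analytic submanifold structure on $F^{-1}(u_0)$ (hence on the open subset $\mathcal P$), and the tangent space is $\ker F_*$. The difference is that the paper simply asserts ``$F$ is locally a submersion'' and stops, whereas you supply the one genuinely nontrivial ingredient, namely that $e_1^*,\ldots,e_n^*$ span $\mathbf R^2_1$ when all edge lengths are positive. Your trichotomy on the causal type of a vector $\nu$ orthogonal to all the $e_i^*$ (edge-lines all concurrent, all asymptotic to a common ideal point, or all perpendicular to a common geodesic) is correct and is a clean way to rule out a proper span; this is worthwhile added content, since the paper's proof is silent on exactly this point.

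Two small caveats. In the time-like case your uniqueness argument (``a geodesic through $v_i$ and through $\nu$ is unique'') fails if $v_i=\nu$; since the $n\ge3$ vertices are distinct, at most one can coincide with $\nu$, so you need only say that you choose a vertex $v_i\neq\nu$. More seriously, the ``alternative'' deduction at the end is flawed: from the $e_i^*$ lying in both a linear plane and an affine plane off the origin you conclude they are collinear and then dismiss this because $n\ge3$ distinct points of $\mathbf S^2_1$ ``cannot be collinear'' --- but $\mathbf S^2_1$ is a hyperboloid of one sheet, hence doubly ruled, and it does contain entire (light-like) affine lines; collinearity of arbitrarily many distinct points on it is not excluded by incidence with the quadric alone. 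That configuration would have to be ruled out geometrically (it corresponds exactly to your light-like case), so the alternative is not a shortcut. Since you present it only as an aside, the main argument stands.
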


\begin{proof}
By Lemma~\ref{lemma:dF}, $F$ is locally a submersion, hence $F^{-1}(u_0)$ is an analytic submanifold of codimension three. In addition
 the tangent space is the kernel of the tangent map of $F$.
\end{proof}

By Proposition~\ref{lemma:A}, the perimeter$$
 \overline {\mathcal P}\to\mathbf R
$$
 is a smooth function. It is also proper and it is bounded below away from zero. 
Thus, to prove Theorem~\ref{theorem1} it suffices to prove the following three lemmas.

\begin{Lemma}
\label{Main Lemma} 
A polygon $p\in \mathcal P$ is a critical point for the perimeter if and only if it has an inscribed circle.
\end{Lemma}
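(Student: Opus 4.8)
The plan is to make the critical-point condition explicit using Proposition~\ref{lemma:A}, and then to recognise it as the coplanarity of $e_1^*,\ldots,e_n^*$, which by Lemma~\ref{lemma:coplanar} is exactly the existence of an inscribed circle.

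Since the coordinates on $\mathcal P\subset\mathbf R^n$ are the edge lengths $l_1,\ldots,l_n$, the perimeter is the \emph{linear} function $P=l_1+\cdots+l_n$, so its differential is the constant covector $(1,\ldots,1)$. By Proposition~\ref{lemma:A}, $T_p\mathcal P=\ker L$, where $L\colon\mathbf R^n\to\mathbf R^2_1$ is the (surjective) linear map $L(\dot l_1,\ldots,\dot l_n)=\sum_i\dot l_i\,e_i^*$. Hence $p$ is a critical point of $P$ restricted to $\mathcal P$ if and only if the covector $(1,\ldots,1)$ annihilates $\ker L$, that is, if and only if it lies in the image of the transpose of $L$. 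Identifying linear functionals on $\mathbf R^2_1$ with vectors of $\mathbf R^2_1$ by means of the Lorentz product, this image consists of the vectors $(w\cdot e_1^*,\ldots,w\cdot e_n^*)$ with $w\in\mathbf R^2_1$. Therefore $p$ is a critical point of the perimeter if and only if there is a vector $w\in\mathbf R^2_1$ with $w\cdot e_i^*=1$ for $i=1,\ldots,n$; geometrically, this says precisely that $e_1^*,\ldots,e_n^*$ lie on a common affine plane of $\mathbf R^2_1$ not passing through the origin.

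It remains to match this condition with the existence of an inscribed circle. Suppose first that $p$ has an inscribed circle of radius $r>0$ centred at $c\in\mathbf H^2$. Then $c$ belongs to the interior $\{x\in\mathbf H^2\mid x\cdot e_i^*<0\}$ and is at distance $r$ from every edge, so the formula $x\cdot l^*=\sinh(d)$ for the oriented distance $d$ (the same one used in the proof of Lemma~\ref{lemma:coplanar}) gives $c\cdot e_i^*=-\sinh r$ for all $i$. Taking $w=-c/\sinh r\in\mathbf R^2_1$ we get $w\cdot e_i^*=1$ for all $i$, so $p$ is a critical point. Conversely, if $p$ is a critical point, choose $w$ with $w\cdot e_i^*=1$ for all $i$; then $e_1^*,\ldots,e_n^*$ lie on the affine plane $\{y\in\mathbf R^2_1\mid w\cdot y=1\}$, so in particular they are coplanar, and Lemma~\ref{lemma:coplanar} provides an inscribed circle. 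This proves the lemma.

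I do not expect a genuine obstacle here: once Proposition~\ref{lemma:A} is available, the computation is essentially forced by the fact that $dP$ is the constant vector $(1,\ldots,1)$, which collapses the critical-point condition to the single affine relation $w\cdot e_i^*=1$. The only point deserving attention is to recognise this relation as the coplanarity appearing in Lemma~\ref{lemma:coplanar}, whose non-trivial direction already contains the compactness argument ensuring that the Lorentz normal to the plane of the $e_i^*$ is time-like — and hence is the centre of a genuine circle, not of a horocycle or of an equidistant curve.
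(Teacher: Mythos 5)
Your proof is correct and takes essentially the same route as the paper's: both reduce the statement to Proposition~\ref{lemma:A} combined with Lemma~\ref{lemma:coplanar}, and your annihilator/transpose-image computation (``$(1,\ldots,1)$ annihilates $\ker L$ iff $w\cdot e_i^*=1$ for some $w$'') is the same linear algebra that the paper phrases via the ranks of the matrices $A$ and $A'$ and the kernel of $(A')^t$. Your explicit check of the forward direction via $w=-c/\sinh r$ is a nice concrete supplement, but not a genuinely different argument.
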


\begin{Lemma}
\label{lemma:boundary} 
At every $p\in \partial\mathcal P=\overline{\mathcal P}\setminus \mathcal P$ 
there exists a deformation to the interior of $\mathcal P$ that decreases strictly the perimeter. 
\end{Lemma}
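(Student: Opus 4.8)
The plan is to analyze what happens near a boundary polygon $p\in\partial\mathcal P$, where at least one edge has length zero. Such a degenerate polygon corresponds to a convex polygon with fewer edges (two consecutive edges $e_{i-1}, e_{i+1}$ now meeting at a single vertex, with the angle $\beta_i$ at the collapsed edge redistributed), together with the data of which edges have collapsed. The key point is that the perimeter, viewed on $\overline{\mathcal P}\subset\mathbf R^n$, is simply $\sum l_i$, a \emph{linear} function, so a deformation decreases the perimeter strictly precisely when $\sum \dot l_i < 0$. So the goal reduces to: exhibit a tangent direction $(\dot l_1,\ldots,\dot l_n)$ to $\overline{\mathcal P}$ at $p$ that points into the interior (i.e.\ $\dot l_j > 0$ for every $j$ with $l_j = 0$, and the direction is tangent to the constraint $\sum \dot l_i e_i^* = 0$) and satisfies $\sum \dot l_i < 0$.

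First I would set up the degenerate polygon carefully. Let $p\in\partial\mathcal P$ be the limit of polygons in $\mathcal P$; say the edges with zero length form a (nonempty) subset $Z\subset\{1,\ldots,n\}$. The surviving edges $e_i^*$ for $i\notin Z$ are the duals of the genuine edges of the smaller convex polygon $\bar p$ obtained from $p$; for $i\in Z$, the vector $e_i^*\in\mathbf S^2_1$ is still well-defined (it is the dual of the oriented geodesic through the vertex where the collapse occurred, pointing in the appropriate outward direction), and by convexity $e_i^*$ lies "between" $e_{i-1}^*$ and $e_{i+1}^*$ along the de Sitter sphere. Because the interior of $\bar p$ is nonempty (since $\sum(\pi-\beta_i) > 2\pi$), the surviving duals $\{e_i^*: i\notin Z\}$ positively span a large enough cone; concretely, there is a time-like vector $c$ (a point of $\mathbf H^2$, an interior point of $\bar p$) with $c\cdot e_i^* < 0$ for all $i$, including $i\in Z$. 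I would use this $c$ to build the deformation.

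The main construction: I would look for $(\dot l_i)$ of the form $\dot l_i = -\,c\cdot e_i^* + \text{(correction)}$. Note $-c\cdot e_i^* > 0$ for all $i$, which handles the requirement $\dot l_j > 0$ on the collapsed edges. The vector $\sum_i(-c\cdot e_i^*)\,e_i^*$ need not vanish, so a correction term lying in the span of the $e_i^*$ (which is all of $\mathbf R^2_1$, since $\mathcal P$ has codimension three exactly because the $e_i^*$ span) must be subtracted off; since the collapsed-edge coordinates have a strictly positive "budget," a sufficiently small correction keeps them positive. This produces a genuine tangent direction to $\overline{\mathcal P}$ pointing strictly inward. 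It then remains to check $\sum_i \dot l_i < 0$. Here I would instead choose the deformation more cleverly: rather than deforming all coordinates, move only in a direction supported essentially on growing the collapsed edges while shrinking the long ones — the intuition being exactly Schlenker's/Steiner-type variational picture, that "rounding a corner" (turning a vertex into a short edge) shortens the total length for fixed angles. Quantitatively, one can pair the constraint $\sum\dot l_i e_i^* = 0$ with a well-chosen space-like covector and with $c$ to force the sign; alternatively, invoke Lemma~\ref{Main Lemma}: the only critical point of the perimeter on $\mathcal P$ is the inscribed-circle polygon, which lies in the interior, so on a neighborhood of $\partial\mathcal P$ the gradient of the perimeter is nonzero, and one checks it does not point "outward" along $\partial\mathcal P$ — equivalently, that the linear functional $\sum\dot l_i$ is not minimized at $p$ among inward directions.

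The step I expect to be the main obstacle is precisely verifying the sign $\sum\dot l_i < 0$ for the inward deformation — i.e.\ ruling out that the perimeter could have a boundary minimum. The cleanest route is probably to produce an \emph{explicit} inward path: take the smaller polygon $\bar p$, pick a vertex where an edge collapsed, and regrow that edge by a small length $t$ while adjusting two neighboring long edges (a local move involving only three or four edges) so as to stay in $\mathcal P$; a direct hyperbolic-trigonometry computation of $\frac{d}{dt}(\text{perimeter})|_{t=0}$ should give a strictly negative value, and this is the hyperbolic analog of the classical Euclidean fact (Lindelöf/Steiner) that such a "corner-cutting in reverse" decreases perimeter. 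One must also handle the case where several edges collapse simultaneously, which is dealt with by performing the moves one collapsed edge at a time (or all at once with the positive-budget argument above), and the case of two adjacent collapsed edges, where the combined local move still lives in a three- or four-dimensional slice where the constraint $\sum\dot l_i e_i^*=0$ can be solved with all collapsed coordinates increasing. I would present the positive-budget/cone argument as the conceptual proof and, if needed, back it with the explicit local computation to pin down the sign.
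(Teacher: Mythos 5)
Your setup is right: since the perimeter is the linear function $\sum l_i$, the whole content of the lemma is to produce a tangent vector $(\dot l_1,\ldots,\dot l_n)$ with $\sum\dot l_i e_i^*=0$, with $\dot l_j>0$ on the collapsed edges, and with $\sum\dot l_i<0$. But you never establish the last inequality, and you correctly flag it yourself as the main obstacle. The positive-budget construction $\dot l_i=-c\cdot e_i^*+\text{(correction)}$ produces an inward direction but carries no information about the sign of $\sum\dot l_i$. The appeal to Lemma~\ref{Main Lemma} is circular for this purpose: knowing that the unique interior critical point is the inscribed-circle polygon does not exclude the infimum of the perimeter being approached at the boundary --- ruling that out is precisely what Lemma~\ref{lemma:boundary} is for. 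And the ``direct hyperbolic-trigonometry computation'' that ``should give a strictly negative value'' is asserted, not performed. So the crux of the lemma is left open.

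The paper closes this gap with a short linear-algebra observation that your setup comes close to but does not use. Suppose the collapsed edges are the consecutive ones $e_1,\ldots,e_k$, all collapsing to a vertex $v_1$. Then $e_n^*,e_1^*,\ldots,e_{k+1}^*$ all lie in the plane $v_1^\perp$, which is space-like, hence carries a Euclidean inner product, and all these vectors are unit vectors there. Convexity places each $e_i^*$ ($1\le i\le k$) strictly inside the positive cone spanned by $e_n^*$ and $e_{k+1}^*$, so $e_i^*=a_ie_n^*+b_ie_{k+1}^*$ with $a_i,b_i>0$, and the Euclidean triangle inequality for unit vectors forces $a_i+b_i>1$ (strictly, since $e_n^*\neq e_{k+1}^*$). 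Taking $\dot l_1=\cdots=\dot l_k=1$, $\dot l_n=-\sum a_i$, $\dot l_{k+1}=-\sum b_i$ and all other $\dot l_j=0$ gives $\sum\dot l_ie_i^*=0$, an inward deformation, and $\sum\dot l_i=\sum_{i=1}^k(1-a_i-b_i)<0$; several groups of consecutive collapsed edges are handled by summing such deformations. This explicit choice, and in particular the inequality $a_i+b_i>1$, is the quantitative step your proposal is missing.
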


\begin{Lemma}
 \label{lemma:uniquenes}
There is a unique polygon $p\in \mathcal P$ with an inscribed circle.
\end{Lemma}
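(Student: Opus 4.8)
\textbf{Proof proposal for Lemma~\ref{lemma:uniquenes}.}
The plan is to prove existence and uniqueness of a polygon with an inscribed circle by a direct ``radial'' parametrization. Fix a center $c\in\mathbf H^2$ and a radius $r>0$; the inscribed circle of radius $r$ about $c$ determines, for each prescribed angle $\beta_i$, a unique supporting geodesic tangent to the circle, and the requirement that the successive tangent geodesics meet at the prescribed exterior angles $\pi-\beta_i$ closes up iff a single scalar equation is satisfied. Concretely, if $e_i$ is the edge tangent to the circle at the point $p_i$ and $v_i=e_i\cap e_{i+1}$, then elementary hyperbolic trigonometry in the right-angled triangle with vertices $c$, $p_i$, $v_i$ gives the half-angle at $v_i$ in terms of $r$ and the distance from $c$ to the tangency point; summing the angular contributions of the $p_i$ around $c$ must equal $2\pi$. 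Thus existence reduces to solving, for $r$, the equation $G(r)=2\pi$, where $G(r)=\sum_i \theta_i(r)$ and $\theta_i(r)$ is the angle subtended at $c$ by the arc between consecutive tangency points.

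First I would write down $\theta_i(r)$ explicitly. In the right triangle $c\,p_i\,v_i$ the angle at $c$ equals the half of the angle one must turn, and a standard formula (e.g.\ from \cite[Ch.~3]{Ratcliffe}) yields $\tan\theta_i = \dfrac{1}{\tanh r}\,\cot(\beta_i/2)$ (or an equivalent closed form), so that $\theta_i(r)=\arctan\!\bigl(\coth r\,\cot(\beta_i/2)\bigr)$ — here I am using that the angle of the polygon at $v_i$ is split into two equal halves by the radius $cv_i$, which holds precisely because both adjacent edges are tangent to the same circle. Then $G(r)=\sum_i\theta_i(r)$. As $r\to 0^+$, $\coth r\to+\infty$, so each $\theta_i(r)\to\pi/2$ and $G(r)\to n\pi/2$; as $r\to+\infty$, $\coth r\to 1$, so $\theta_i(r)\to\arctan(\cot(\beta_i/2))=\pi/2-\beta_i/2=(\pi-\beta_i)/2$ and $G(r)\to\frac12\sum_i(\pi-\beta_i)$. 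Each $\theta_i$ is strictly decreasing in $r$ (since $\coth$ is), hence $G$ is continuous and strictly decreasing, so $G$ is a bijection from $(0,\infty)$ onto the open interval $\bigl(\tfrac12\sum(\pi-\beta_i),\,\tfrac{n\pi}{2}\bigr)$. The hypothesis $\sum(\pi-\beta_i)>2\pi$ puts $2\pi$ in this interval (the upper endpoint $n\pi/2>2\pi$ because $n\geq 5$ is forced, as $\sum(\pi-\beta_i)<n\pi$ and $>2\pi$ gives $n>2$, and a short case check rules out $n=3,4$), so there is exactly one $r$ with $G(r)=2\pi$; this $r$ produces the desired polygon, and it does not depend on the choice of $c$ by homogeneity of $\mathbf H^2$ under $SO_0(2,1)$ (Remark~\ref{remark:Lie}), so the polygon is unique up to isometry, which is the sense meant in $\mathcal P$.

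An alternative, coordinate-free route — perhaps cleaner to write — is via Lemma~\ref{lemma:coplanar}: a polygon in $\mathcal P$ has an inscribed circle iff $e_1^*,\dots,e_n^*$ lie on an affine plane in $\mathbf R^2_1$, and coplanarity together with the closing relation $\sum \dot l_i e_i^*=0$ of Proposition~\ref{lemma:A} pins down the configuration; one normalizes the center to $(1,0,0)\in\mathbf H^2$, so that $e_i^*=\sinh r\,(\cosh r)^{-1}(\dots)$ — but this essentially repackages the same trigonometry. The main obstacle I anticipate is purely bookkeeping: getting the monotonicity and the exact endpoint limits of $G(r)$ right, and confirming that $2\pi$ genuinely lies strictly between them given only $\sum(\pi-\beta_i)>2\pi$ (the upper endpoint deserves the small argument sketched above). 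There is no analytic difficulty beyond that; strict monotonicity of a sum of $\arctan(\coth r\,\cdot)$ is immediate, and continuity is clear, so intermediate value plus injectivity finishes both halves of the statement at once.
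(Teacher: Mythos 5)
Your overall strategy is exactly the paper's: decompose the candidate polygon into the $n$ quadrilaterals (equivalently, $2n$ right triangles) determined by the center, the tangency points and the vertices, reduce existence and uniqueness to solving $G(r)=\sum_i\theta_i(r)=2\pi$ for the radius $r$, and conclude by strict monotonicity plus the intermediate value theorem. However, your trigonometric formula for $\theta_i(r)$ is wrong, and the error is not cosmetic: it breaks the endpoint analysis on which the existence half of the argument rests. In the right triangle with vertices $c$, $p_i$, $v_i$ (right angle at the tangency point $p_i$, angle $\beta_i/2$ at $v_i$, leg $cp_i$ of length $r$ opposite the angle $\beta_i/2$), the correct relation is $\cos(\beta_i/2)=\cosh(r)\,\sin(\alpha_i)$ for the angle $\alpha_i$ at $c$, so the full central angle of the $i$-th quadrilateral is $\theta_i(r)=2\arcsin\bigl(\cos(\beta_i/2)/\cosh r\bigr)$, which decreases strictly from $\theta_i(0^+)=\pi-\beta_i$ to $\theta_i(\infty)=0$. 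Hence $G(0^+)=\sum(\pi-\beta_i)>2\pi$ by the standing hypothesis and $G(\infty)=0<2\pi$, and the argument closes. Your formula $\theta_i=\arctan\bigl(\coth r\,\cot(\beta_i/2)\bigr)$ instead decreases from $\pi/2$ to $(\pi-\beta_i)/2$; already the Euclidean limit $r\to0^+$ should give $(\pi-\beta_i)/2$ for the half-angle at $c$ (angles of a small triangle sum to $\pi$), not $\pi/2$, so the formula cannot be right.

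The consequence is that your claimed range $\bigl(\tfrac12\sum(\pi-\beta_i),\,n\pi/2\bigr)$ for $G$ need not contain $2\pi$: the hypothesis $\sum(\pi-\beta_i)>2\pi$ gives no upper bound on $\sum(\pi-\beta_i)$, and as soon as $\sum(\pi-\beta_i)\geq4\pi$ your lower endpoint is $\geq2\pi$ and the equation $G(r)=2\pi$ has no solution in your setup. A concrete counterexample is the right-angled octagon ($n=8$, all $\beta_i=\pi/2$, so $\sum(\pi-\beta_i)=4\pi$): it certainly exists and has an inscribed circle, yet your interval is $(2\pi,4\pi)$ and misses $2\pi$. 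You sensed the right place to worry (you flag the endpoint verification as the ``main obstacle''), but you checked only the upper endpoint $n\pi/2>2\pi$, which is the harmless one; it is the lower endpoint, i.e.\ the behaviour as $r\to\infty$, that fails with your formula. With the corrected $\theta_i(r)$ everything you wrote goes through and coincides with the paper's proof; the bisection observation and the reduction to a single scalar equation are both fine.
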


\begin{proof}[Proof of Lemma~\ref{Main Lemma}]
 By Lemma~\ref{lemma:coplanar}  one must show that $p\in \mathcal P$ is a critical point of the perimeter if and only if
$e_1^*,\ldots,e^*_n$ are coplanar in  $\mathbf R^2_1$. 
By Proposition~\ref{lemma:A}, a critical point for the perimeter  is determined  by the fact that whenever the vector $(\dot{l}_1,\ldots,\dot{l}_n)\in\mathbf R^n$
satisfies $\sum \dot{l}_i e_i^*=0
$, then $\sum \dot{l}_i =0$. Following \cite{Schlenker} again, consider $A\in  M_{3\times n}(\mathbf R)$ the matrix whose columns are the 
components of the vectors $e_1^*,\ldots,e^*_n$ in $\mathbf R^2_1$. Consider $A'\in  M_{4\times n}(\mathbf R)$ the matrix obtained from $A$ by adding a row with each entry equal to $1$.
A  critical point for the perimeter is characterized by the fact that $A'$ and $A$ have the same kernel, or equivalently
$\operatorname{rank}(A')=\operatorname{rank}(A)=3$. This can be restated by saying that, as a linear map from $\mathbf R^4$ to $\mathbf R^n$, 
the transpose $(A')^t$ has a nontrivial kernel. Finally, notice that there is a correspondence between projective classes of non-zero elements in the kernel of $(A')^t$ and
affine planes  containing $e_1^*,\ldots,e^*_n$. 
\end{proof}

\begin{proof}[Proof of Lemma~\ref{lemma:boundary}]
At a boundary polygon $p\in \partial \mathcal P$ there are some edges of length $0$. Assume first that the collapsed edges are all consecutive: $e_1,\ldots,e_k$
(namely $l_1=\cdots=l_k=0$ and $l_{k+1},\ldots,l_n>0$). Even if $l_1=  \cdots=l_k=0$, the $e_1^*,\ldots,e_k^*\in \mathbf S^2_ 1$ are well determined, 
because $\mathcal P$ is 
defined from the angles.
Moreover, the collapsed vertex $v_n=v_1=\cdots=v_k$ belongs to all $e_n,e_1,\ldots,e_{k+1}$. Thus $e_n^*,e_1^*,\ldots,e_{k+1}^*$ are contained in the 
orthogonal to $v_1$, which is a space-like plane.  By convexity (cf.~Figure~\ref{fig:vectors}), 
$
\forall i=1,\ldots,k,$ there are $a_i,b_i>0$, such that  
$$
e_i^*= a_i e_n^* +b_i e_{k+1}^*.
$$ 
Here it is important that $a_i,b_i>0$, and that by the triangle inequality we have:
$$
1 < a_i+b_i.
$$
Consider a deformation with tangent vector
$$
\dot{l}_1=\cdots=\dot{l}_k=1, \quad \dot{l}_n=-\sum_{i=1}^k a_i, \quad \dot{l}_{k+1}=-\sum_{i=1}^k b_i, 
$$
and $\dot{l}_j=0\textrm{ for }k+1<j<n$. Since it satisfies $\sum_{i=1}^n\dot{l}_i e_i^*=0$, this infinitesimal deformation is tangent 
to a deformation in $F^{-1}(u_0)$. All edge lengths become  positive, and convexity is preserved by the position of the normal vectors
$e_n^*,e_1^*,\ldots, e_{k+1}^*$ (cf.~Figure~\ref{fig:vectors}), thus 
it is a deformation to the interior of $\mathcal P$. The derivative of the perimeter in this direction is
$$
\sum_{i=1}^n\dot{l}_i=\sum_{i=1}^k(1-a_i-b_i)<0.
$$

\begin{figure}
\begin{center}
{\psfrag{en}{$e_n^*$}
\psfrag{e1}{$e_1^*$}
\psfrag{e2}{$e_2^*$}
\psfrag{ek1}{$e_{k+1}^*$}
\psfrag{ek}{$e_k^*$}
\psfrag{eek1}{$e_{k+1}$}
\psfrag{een}{$e_n$}
\psfrag{v}{$v_n=v_1=\cdots=v_k$}
\psfrag{dots}{$\vdots$}
\psfrag{hdots}{$\dots$}
\psfrag{h}{$\mathbf H^2$}
\psfrag{p}{$p$}
\psfrag{vv}{$v_1^{\bot}$}
\includegraphics[height=2.5cm]{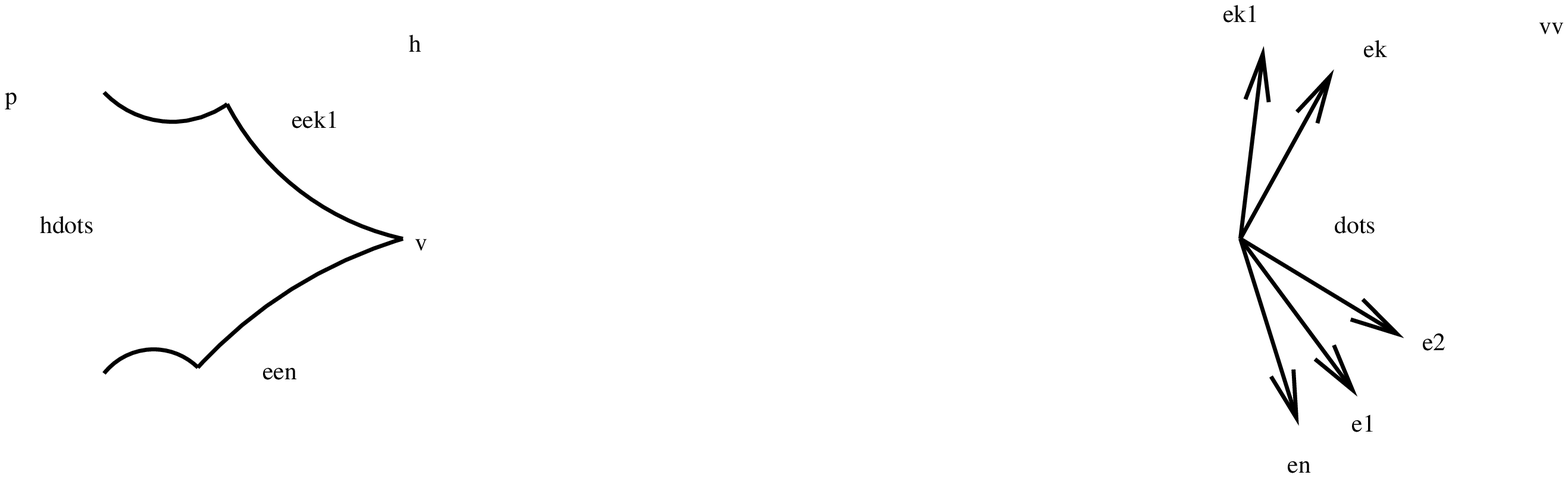}
}
\end{center}
   \caption{The polygon $p$ with collapsed edges, on the left hand side, and the vectors $e_n^*,e_1^*,\ldots,e_{k+1}^*$ in the plane orthogonal to $v_1$, on the right hand side.}\label{fig:vectors}
\end{figure}

The argument applies in general, by grouping the collapsed edges that are consecutive and adding all infinitesimal deformations.
\end{proof}

\begin{proof}[Proof of Lemma~\ref{lemma:uniquenes}]
For each $1\leq i\leq n$ and for $r>0$, consider  the hyperbolic  quadrilateral with ordered angles $\beta_i$, $\pi/2$, $\theta_i(r)$ and $\pi/2$, and side lengths
$r$ for the two edges adjacent to the vertex of angle $\theta_i(r)$, see Figure~\ref{fig:quad}. Polygons in $\mathcal P$ with an inscribed circle are in bijection to the set of $r>0$ satisfying
 $\sum \theta_i(r)= 2\pi$, where $r$ is the radius
of the inscribed circle, and the polygon is obtained by gluing the quadrilaterals along the edges of length $r$. Now the lemma follows from the the following properties:
$\theta_i(r)$ is strictly decreasing  on $r$, $\theta_i(0)= \pi-\beta_i$ (hence $\sum \theta_i(0)> 2\pi$) and $\theta_i(\infty)=0$. 
\end{proof}

\begin{figure}[h]
\begin{center}
{\psfrag{r}{$r$}
\psfrag{b}{$\beta_i$}
\psfrag{t}{$\theta_i(r)$}
\includegraphics[height=2.5cm]{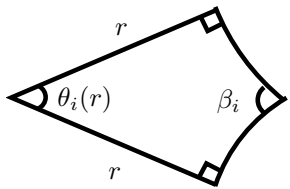}
}
\end{center}
   \caption{The quadrilateral in the proof of Lemma~\ref{lemma:uniquenes}.}\label{fig:quad}
\end{figure}


\paragraph*{Acknowledgements} I am indebted to Igor Rivin and Jean-Marc Schlenker for stimulating conversations.

\begin{footnotesize}
\bibliographystyle{plain}


\noindent \textsc{Departament de Matem\`atiques, Universitat Aut\`onoma de Barcelona,\\  08193 Cerdanyola del Vall\`es (Spain)}

\noindent \emph{porti@mat.uab.cat}
\end{footnotesize}

\end{document}